\newtheoremstyle{mio}%
{}{} 
{\itshape}{} 
{\bfseries}{.}{ } 
{#1 #2\thmnote{~\mdseries(#3)}} 
\theoremstyle{mio}
\newtheorem{teor}{Theorem}[section]
\newtheorem{cor}[teor]{Corollary}
\newtheorem{prop}[teor]{Proposition}
\newtheorem{lemma}[teor]{Lemma}
\newtheoremstyle{definition2}%
{}{} 
{}{} 
{\bfseries}{.}{ } 
{#1 #2\thmnote{\mdseries~ #3}} 
\theoremstyle{definition2}
\newtheorem{hyp}[teor]{Hypothesis}
\title{The Golomb topology of polynomial rings, II}
\author{Dario Spirito}
\date{\today}
\address{Dipartimento di Scienze Matematiche, Informatiche e Fisiche, Universit\`a degli Studi di Udine, Udine, Italy}
\email{dario.spirito@uniud.it}
\subjclass[2020]{54G99; 13F20; 13F05; 12E99}
\keywords{Golomb topology; polynomial rings; Dedekind domains}
\newcommand{\nz}{\bullet}
\newcommand{\pow}[1]{\mathrm{pow}({#1})}
\DeclareMathOperator{\car}{char}
\begin{document}

\begin{abstract}
We study the interplay of the Golomb topology and the algebraic structure in polynomial rings $K[X]$ over a field $K$. In particular, we focus on infinite fields $K$ of positive characteristic such that the set of irreducible polynomials of $K[X]$ is dense in the Golomb space $G(K[X])$. We show that, in this case, the characteristic of $K$ is a topological invariant, and that any self-homeomorphism of $G(K[X])$ is the composition of multiplication by a unit and a ring automorphism of $K[X]$.
\end{abstract}

\maketitle

\section{Introduction}
Let $R$ be an integral domain, i.e., a commutative unitary ring without zero-divisors. The \emph{Golomb space} $G(R)$ on $R$ is the topological space having $R^\nz:=R\setminus\{0\}$ as its base space and whose topology (the \emph{Golomb topology}) is generated by the cosets $a+I$, where $a\in R$ and $I$ is an ideal such that $a^\nz$ and $I$ are coprime, i.e., $\langle a,I\rangle=R$. This construction was originally considered on the set $\insN$ of natural numbers by Brown \cite{brown-golomb} and Golomb \cite{golomb-connectedtop,golomb-aritmtop}, as part of a series of coset topologies \cite{knopf}, and subsequently extended to arbitrary rings by \cite{clark-golomb} (following ideas introduced in \cite{bmt-golomb}) with particular focus on what happens when $R$ is a Dedekind domain with infinitely many maximal ideals. In this case, $G(R)$ is a Hausdorff space that is not regular, and is a connected space that is disconnected at each of its points.

An interesting question is how much the topological structure of $G(R)$ reflects the algebraic structure of $R$: for example, it is an open question whether the fact that $G(R)$ and $G(S)$ are homeomorphic implies that $R$ and $S$ are isomorphic (as rings). Relatedly, one can ask if there are self-homeomorphisms of $G(R)$ besides the one arising from the algebraic structure, i.e., multiplication by units and automorphisims of $R$ (and their compositions).

These problems were studied in \cite{golomb-almcyc} for $R=\insZ$, showing that the only self-homeomorphisms are the trivial ones (the identity and the multiplication by $-1$) \cite[Theorem 7.7]{golomb-almcyc} and that $G(\insZ)\simeq G(R)$ cannot happen when $R\neq\insZ$ is contained in the algebraic closure of $\insQ$ \cite[Theorem 7.8]{golomb-almcyc}; a variant of its method showed that $\insN$, with the Golomb topology, is rigid, i.e., it does not have any nontrivial self-homeomorphism \cite{rigidity-N}. As a second case, \cite{golomb-poly} studied the space $G(R)$ when $R=K[X]$ is a polynomial ring over a field $K$, showing that several algebraic properties of $K$ (for example, having positive or zero characteristic, being algebraically or separably closed) imply different properties on the Golomb space. In particular, it was shown that if $K,K'$ are fields of positive characteristic that are algebraic over their base field, then $G(K[X])\simeq G(K'[X])$ imply $K\simeq K'$ when one of them is algebraically closed \cite[Theorem 5.11 and Corollary 7.2]{golomb-poly} and when they have the same characteristic \cite[Theorem 7.5]{golomb-poly}.

In this paper, we improve these results in two ways. In Section \ref{sect:homeo}, we show that, if the set of irreducible polynomials of $K[X]$ is dense in $G(K[X])$, then the characteristic of $K$ can be detected from the Golomb topology; that is, if $K,K'$ satisfy these hypothesis and $G(K[X])\simeq G(K'[X])$, then the characteristic of $K$ and $K'$ are equal (Theorem \ref{teor:charp=q}). As a consequence, we show that if $G(K[X])\simeq G(K'[X])$ and $K$ is algebraic over $\ins{F}_p$, then $K$ and $K'$ must be isomorphic (Theorem \ref{teor:algFp}). In Section \ref{sect:auto}, we concentrate on self-automorphisms of the Golomb space $G(K[X])$ and show that (under the above density hypothesis, and the condition that the characteristic of $K$ is positive) all such self-automorphisms are algebraic in nature, being compositions of a multiplication by a unit and a ring automorphism of $K[X]$ (Theorem \ref{teor:auto}). The key to both results is a variant of the proofs of \cite[Lemma 5.10]{bmt-golomb} and \cite[Theorem 6.7]{golomb-poly}, which allows to prove that, under appropriate hypothesis, any homeomorphism of Golomb spaces respects powers, i.e., $h(a^n)=h(a)^n$ for every $a\in G(R)$, $n\inN$.

\medskip

Throughout the paper, $K$ is a field, and if $R$ is an integral domain then $G(R)$ denotes the Golomb space as defined above. Given a set $X\subseteq R$, we set $X^\nz:=X\setminus\{0\}$. By a ``polynomial ring'', we always mean a polynomial ring in one variable. We denote by $U(R)$ the set of the units of $R$, so that $U(K[X])=K^\nz$.

If $h:G(R)\longrightarrow G(S)$ is a homeomorphism, then $h$ sends units to units \cite[Theorem 13]{clark-golomb} and prime ideals to prime ideals (i.e., if $P$ is a prime ideal of $R$, then $h(P^\nz)\cup\{0\}$ is a prime ideal of $S$; equivalently, $h(P^\nz)=Q^\nz$ for some prime ideal $Q$ of $S$) \cite[Theorem 3.6]{golomb-poly}.

If $x$ is a prime element of $R$, we set $\pow{x}:=\{ux^n\mid u\in U(R),n\inN\}$. If $h:G(R)\longrightarrow
G(S)$ is a homeomorphism, then $h(\pow{x})=\pow{h(x)}$ (see \cite[Section 2]{golomb-poly}).

If $P$ is a prime ideal of $R$, the \emph{$P$-topology} on $R\setminus P$ is just the $P$-adic topology, i.e., the topology having the sets $a+P^n$, for $n\inN$, as a local basis for $a$. The $P$-topology can be recovered from the Golomb topology, and thus, for any prime ideal $P$ of $R$, a homeomorphism $h:G(R)\longrightarrow G(S)$ restricts to a homeomorphism between $R\setminus P$ endowed with the $P$-topology and $S\setminus Q$ endowed with the $Q$-topology (where $h(P^\nz)=Q^\nz$) \cite[Theorem 4.3]{golomb-almcyc}.

We say that $R$ is \emph{Dirichlet} if the set of irreducible elements of $R$ is dense, with respect to the Golomb topology. If $R=K[X]$ is Dirichlet, where $K$ is a field, we say for brevity that $K$ itself is Dirichlet. This happens, for example, when $K$ is pseudo-algebraically closed and admits separable irreducible polynomials of arbitrary large degree \cite[Theorem A]{dirichlet-polynomial}; in particular, it happens when $K$ is an algebraic extension of a finite field that is not algebraically closed \cite[Corollary 11.2.4]{field_arithmetic}.

\section{Detecting the characteristic}\label{sect:homeo}
In this section, we show how to detect the characteristic of a polynomial ring from its Golomb topology.

Given an element $t\in R^\nz$, we denote by $\sigma_t$ the multiplication by $t$, i.e., 
\begin{equation*}
\begin{aligned}
\sigma_t\colon G(R) & \longrightarrow G(R),\\
y& \longmapsto ty.
\end{aligned}
\end{equation*}
It is easy to see that the map $\sigma_t$ is always continuous, and that it is a homeomoprhism if $t$ is a unit of $R$.

If $R$ is a polynomial ring over a Dirichlet field, and $h:G(R)\longrightarrow G(R)$ is a self-homeomorphism such that $h(P^\nz)=P^\nz$ for every prime ideal $P$, then $h$ must be equal to $\sigma_u$ for some unit $u$ \cite[Proposition 7.4]{golomb-poly}; this result can be improved in the following way.

\begin{prop}\label{prop:multiplic-unit}
Let $R,S$ be polynomials rings over Dirichlet fields. If $h:G(R)\longrightarrow G(S)$ is a homeomorphism such that $h(1)=1$, then $h(uz)=h(u)h(z)$ for every $u\in U(R)$ and $z\in R^\nz$.
\end{prop}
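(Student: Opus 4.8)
The plan is to realize the desired multiplicativity as a conjugation identity between two self-homeomorphisms. Fix a unit $u\in U(R)$. Since $u$ is a unit, $\sigma_u$ is a self-homeomorphism of $G(R)$, and since $h$ sends units to units, $h(u)$ is a unit of $S$, so $\sigma_{h(u)}$ is a self-homeomorphism of $G(S)$. I would set $\phi_u:=h\circ\sigma_u\circ h^{-1}$, a self-homeomorphism of $G(S)$. The identity $h(uz)=h(u)h(z)$ for all $z\in R^\nz$ is then equivalent to the equality of self-homeomorphisms $\phi_u=\sigma_{h(u)}$: indeed $\phi_u(h(z))=h(\sigma_u(h^{-1}(h(z))))=h(\sigma_u(z))=h(uz)$, while $\sigma_{h(u)}(h(z))=h(u)h(z)$, and $h$ is surjective. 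So the whole statement reduces to identifying $\phi_u$ with multiplication by $h(u)$.

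To do this I would invoke \cite[Proposition 7.4]{golomb-poly}, which says that a self-homeomorphism of $G(S)$ fixing every prime ideal setwise is of the form $\sigma_v$ for some unit $v$; this applies because $S$ is a polynomial ring over a Dirichlet field. So the key step is to verify that $\phi_u(P^\nz)=P^\nz$ for every prime ideal $P$ of $S$. Because $h^{-1}$ is a homeomorphism, it sends $P^\nz$ to $Q^\nz$ for some prime ideal $Q$ of $R$, with $h(Q^\nz)=P^\nz$. Here comes the crucial observation: multiplication by a unit carries an ideal to itself, so $uQ=Q$ and hence $\sigma_u(Q^\nz)=Q^\nz$. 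Applying $h$ then gives $\phi_u(P^\nz)=h(\sigma_u(h^{-1}(P^\nz)))=h(\sigma_u(Q^\nz))=h(Q^\nz)=P^\nz$, as wanted. Thus $\phi_u=\sigma_v$ for some unit $v$ of $S$.

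Finally I would pin down $v$. Evaluating at $1$ and using $h(1)=1$ (hence $h^{-1}(1)=1$), we get $v=\sigma_v(1)=\phi_u(1)=h(\sigma_u(1))=h(u)$, so $\phi_u=\sigma_{h(u)}$ and the claim follows. The argument is short, and the only real subtlety lies in the reduction encoded by the conjugation $\phi_u$ together with the transport of prime ideals along $h$; once \cite[Proposition 7.4]{golomb-poly} is available, the verification that $\phi_u$ stabilizes every prime is immediate from the trivial fact that a unit multiple of an ideal equals that ideal. I therefore expect no genuine obstacle beyond correctly setting up the conjugation and checking that $S$ meets the hypotheses of \cite[Proposition 7.4]{golomb-poly}.
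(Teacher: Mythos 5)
Your proof is correct and is essentially the paper's own argument in mirror image: the paper conjugates the other way, forming $h':=h^{-1}\circ\sigma_{h(u)}\circ h$ on $G(R)$ and applying \cite[Proposition 7.4]{golomb-poly} there, while you form $h\circ\sigma_u\circ h^{-1}$ on $G(S)$ and apply the same proposition to $S$; both then pin down the unit by evaluating at $1$. The only minor point in your favor is that your verification yields the setwise equality $\phi_u(P^\nz)=P^\nz$ directly, whereas the paper's check gives $h'(P)\subseteq P$ and implicitly uses maximality of nonzero primes in $K[X]$ to upgrade this to equality.
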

\begin{proof}
Let $u$ be a unit of $R$. Consider the map $h':=h^{-1}\circ\sigma_{h(u)}\circ h$, that is,
\begin{equation*}
h'(z)=h^{-1}(h(u)h(z))
\end{equation*}
for every $z\in R^\nz$. Since $u$ is a unit, so is $h(u)$, and thus $\sigma_{h(u)}$ is a self-homeomorphism of $G(S)$; therefore, $h'$ is a homeomorphism of $G(R)$.

Let now $P$ be a prime ideal of $R$; then, $h(P^\nz)=Q^\nz$ for some prime ideal $Q$ of $S$. For every $z\in P$, we have $h(u)h(z)\in Q$, and thus $h'(z)=h^{-1}(h(u)h(z))\in P$. Hence, $h'(P)\subseteq P$, i.e., $h$ fixes every prime ideal of $R$; by \cite[Proposition 7.4]{golomb-poly}, $h'=\sigma_t$ for some unit $t$ of $R$. However, since $h(1)=1$,
\begin{equation*}
h'(1)=h^{-1}(h(u)h(1))=h^{-1}(h(u))=u,
\end{equation*}
and thus it must be $t=u$, i.e., $h'(z)=uz$ for every $z\in R^\nz$; it follows that, $h(uz)=h(u)h(z)$ for every $z\in R^\nz$.
\end{proof}

We now want to study the relationship between a homeomorphism of Golomb topologies and powers of elements. For an element $z\in R^\nz$, we denote by $z^\insN$ the set of powers of $z$, i.e., $z^\insN:=\{z^n\mid n\inN\}$.
\begin{lemma}\label{lemma:Xhdenso}
Let $R,S$ be polynomial rings over Dirichlet fields, and let $h:G(R)\longrightarrow G(S)$ be a homeomorphism such that $h(1)=1$. The set
\begin{equation*}
\mathcal{X}_h:=\{z\in R\mid z\text{~is irreducible and~}h(z^\insN)=h(z)^\insN\}
\end{equation*}
is dense in $G(R)$.
\end{lemma}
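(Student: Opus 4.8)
The plan is to show that $\mathcal{X}_h$ is dense by exhibiting, in every nonempty basic open set, at least one irreducible element $z$ for which $h$ respects the full power semigroup $z^{\insN}$. The two hypotheses I expect to lean on are: (i) the Dirichlet condition on $R$, which tells me that the irreducible elements themselves form a dense set; and (ii) the fact, recalled in the excerpt, that a homeomorphism already respects $\pow{x}$, i.e.\ $h(\pow{x})=\pow{h(x)}$ for any prime element $x$. The gap between these two facts and the desired conclusion is precisely the difference between $\pow{z}=\{uz^n\mid u\in U(R),\,n\inN\}$ and the smaller set $z^{\insN}=\{z^n\mid n\inN\}$: knowing $h$ permutes the whole $U(R)$-orbit of the powers of $z$ is not the same as knowing it sends the specific element $z^n$ to $h(z)^n$. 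So the real content of the lemma is that, for a dense set of irreducibles, this unit-ambiguity can be pinned down.

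**The key steps** I would carry out are as follows. First, fix a nonempty basic open set $V=a+I$ of $G(R)$ and, using the Dirichlet hypothesis, select an irreducible $z\in V$. Next, apply the recalled result $h(\pow{z})=\pow{h(z)}$: since $h(z)$ is again a prime element (homeomorphisms send primes to primes), for each $n$ there is a unit $u_n\in U(S)$ and an exponent $m_n$ with $h(z^n)=u_n\,h(z)^{m_n}$. The first sub-goal is to force $m_n=n$, matching exponents; here I expect to use that $h$ respects the $P$-adic (i.e.\ $P$-topology) structure, recalled from \cite[Theorem 4.3]{golomb-almcyc}, so that the valuation-theoretic ``height'' of $z^n$ at the prime $(z)$, namely $n$, is preserved and transported to the valuation of $h(z^n)$ at the prime $h((z))$. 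The second and harder sub-goal is to eliminate the units $u_n$, i.e.\ to show $u_n=1$. Because $h(1)=1$, the element $z^0=1$ forces $u_0=1$, and the multiplicativity over units from Proposition \ref{prop:multiplic-unit} should let me normalize; the remaining task is to show that for a dense subfamily of the chosen irreducibles, the unit factors are all trivial, not merely that they exist.

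**The main obstacle** I anticipate is exactly this elimination of the unit ambiguity. Knowing $h(z^n)=u_n h(z)^n$ for a single irreducible $z$ does not by itself give $u_n=1$; the units $u_n$ could a priori vary with $n$ in a nontrivial way. The natural strategy is \emph{not} to prove $u_n=1$ for every irreducible, but to argue that the set of ``good'' irreducibles (those with all $u_n=1$) is itself dense, which is all the statement of the lemma requires. I would try to show that if some irreducible $z$ fails, one can replace it by $u z$ for a suitable unit $u$ (staying in the same basic open set $a+I$, since $u$ can be chosen close to $1$ in the relevant topologies) and absorb the discrepancy, using Proposition \ref{prop:multiplic-unit} to control how $h$ interacts with multiplication by units. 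The delicate point is coordinating the unit correction simultaneously across all exponents $n$; I expect this to require a limiting or density argument internal to $G(R)$, perhaps exploiting that the conditions ``$h(z^n)=h(z)^n$'' for varying $n$ are closed-type constraints compatible with the $P$-topology, so that the set of irreducibles satisfying all of them remains dense rather than being cut down to a nowhere-dense set.
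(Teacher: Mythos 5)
You have correctly located the content of the lemma---the gap between $h(\pow{z})=\pow{h(z)}$ and $h(z^\insN)=h(z)^\insN$ is the unit ambiguity, and the right target is density of the good irreducibles rather than goodness of every irreducible---but your proposal stops exactly where the proof has to begin: no mechanism for killing the units $u_n$ is ever supplied, and the one concrete device you do propose cannot work. In $K[X]$, two distinct associates never lie in a common basic open set: if $z$ and $uz$ both belong to $a+I$ with $\langle a,I\rangle=R$ and $I\neq R$, then $(u-1)z\in I$, and since $u-1$ is a nonzero constant (hence a unit) when $u\neq 1$, this forces $z\in I$ and then $a\in I$, a contradiction. So ``replace $z$ by $uz$ while staying in $a+I$'' is impossible, and the remaining suggestions (``closed-type constraints'', ``a limiting or density argument internal to $G(R)$'') are hopes, not arguments. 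There is a secondary problem as well: your first sub-goal, forcing $m_n=n$ by ``preservation of the $(z)$-adic valuation'', is both unnecessary and unsupported. Unnecessary, because the lemma asserts only the set equality $h(z^\insN)=h(z)^\insN$ (the elementwise statement $h(a^n)=h(a)^n$ is deferred to Proposition \ref{prop:han}, which is proved later by a completely different argument using units of large order); unsupported, because the cited $P$-topology result is a statement about $R\setminus P$, so it gives no control whatsoever on valuations of elements \emph{inside} $P$.

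For contrast, the paper's proof reverses your quantifiers: instead of fixing an irreducible in the given open set and trying to correct it, it exhibits special open sets on which \emph{every} irreducible is automatically good. Given a prime $Q$ of $S$ with $h(P^\nz)=Q^\nz$, the $P$-adic/$Q$-adic homeomorphism yields $n$ with $h(1+P^n)\subseteq 1+Q^m$. The set $1+P^n$ is closed under multiplication, so for an irreducible $z\in 1+P^n$ every power $z^t$ stays in $1+P^n$, whence $h(z^t)\in 1+Q$ and also $h(z)\in 1+Q$. Writing $h(z^t)=u\,h(z)^{\alpha(t)}$ and reducing modulo $Q$ gives $u\equiv 1\bmod Q$; since the units of $S$ are the nonzero constants, $U(S)\longrightarrow S/Q$ is injective and $u=1$. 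This congruence-mod-$Q$ trick is precisely the missing mechanism. Density then follows by taking, for a given basic open set $c+I$, a prime $P$ coprime with $I$ (pulled back from a suitable prime of $S$) and applying the Dirichlet hypothesis to the nonempty open set $(c+I)\cap(1+P^n)$: every irreducible found there lies in $\mathcal{X}_h$.
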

\begin{proof}
Let $Q$ be a prime ideal of $S$, and let $P$ be the prime ideal of $R$ such that $h(P^\nz)=Q^\nz$. Then, $h$ restricts to a homeomorphism between the $P$-topology on $R\setminus P$ and the $Q$-topology on $S\setminus Q$; it follows that for every $m\inN$ there is an $n$ such that $h(1+P^n)\subseteq 1+Q^m$. Let $z$ be any irreducible element in $1+P^n$ (which exists since $R$ is Dirichlet); then, for every $t\inN$, we have $z^t\in 1+P^n$ too, and thus also $h(z^t)\in 1+Q^m$. Moreover, $z^t\in\pow{z}$, and thus there are a unit $u$ of $S$ and an integer $\alpha(t)$ such that $h(z^t)=uh(z)^{\alpha(t)}$. Since $U(S)\longrightarrow S/Q$ is injective, it follows that $u=1$ for every $t$. Hence, $h(z^\insN)=h(z)^\insN$.

Let now $\Omega=c+I$ be a subbasic open set of $G(R)$, and let $d+J$ be a subbasic open set contained in $h(\Omega)$. Let $Q$ be a prime ideal of $S$ that is coprime with $J$; then, the prime $P$ such that $h(P^\nz)=Q^\nz$ is coprime with $I$. Since $R$ is Dirichlet, $(c+I)\cap(1+P^n)$ contains irreducible elements for every $n$; in particular, is $n$ is large enough, every irreducible polynomial in $1+P^n$ is in $\mathcal{X}_h$, and thus $\mathcal{X}_h\cap(c+I)$ is nonempty. Thus $\mathcal{X}_h$ is dense. 
\end{proof}

The following proof follows the one of \cite[Theorem 6.8]{golomb-poly}, which in turn was based on the one of \cite[Lemma 5.10]{bmt-golomb}. 
\begin{lemma}\label{lemma:monogenic}
Let $R,S$ be polynomial rings over Dirichlet fields, and let $h:G(R)\longrightarrow G(S)$ be a homeomorphism such that $h(1)=1$. For every $a\in R$, we have $h(a^\insN)=h(a)^\insN$.
\end{lemma}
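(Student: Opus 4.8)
The plan is to fix $a\in R^\nz$, write $b:=h(a)$, and prove the two inclusions $h(a^\insN)\subseteq b^\insN$ and $b^\insN\subseteq h(a^\insN)$. Since $h^{-1}$ is again a homeomorphism of Golomb spaces with $h^{-1}(1)=1$ and $h^{-1}(b)=a$, the second inclusion is the first one applied to $h^{-1}$; hence it suffices to show $h(a^n)\in b^\insN$ for every $n\in\insN$ (the case $n=0$ being $h(1)=1=b^0$). To identify $h(a^n)$ as a power of $b$ I would pin it down modulo arbitrarily high powers of a single prime: choosing a nonzero prime ideal $Q=\langle q\rangle$ of $S$ coprime with $b$, one has $\bigcap_m Q^m=0$, so two elements of $S$ congruent modulo $Q^m$ for every $m$ coincide.

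Let $P$ be the prime of $R$ with $h(P^\nz)=Q^\nz$; since $h$ maps $R\setminus P$ onto $S\setminus Q$ and $b\notin Q$, we get $a\notin P$, so $P$ is nonzero and coprime with every power of $a$. By \cite[Theorem 4.3]{golomb-almcyc}, $h$ restricts to a homeomorphism between the $P$-topology on $R\setminus P$ and the $Q$-topology on $S\setminus Q$. Fix $m$. Continuity at the points $a$ and $a^n$ yields an $N$ such that $w\equiv a\pmod{P^N}$ forces $h(w)\equiv b\pmod{Q^m}$, and $w\equiv a^n\pmod{P^N}$ forces $h(w)\equiv h(a^n)\pmod{Q^m}$. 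Now $a+P^N$ is a nonempty basic Golomb-open set (as $a\notin P$), so by Lemma \ref{lemma:Xhdenso} it contains an element $z\in\mathcal{X}_h$. Then $z\equiv a\pmod{P^N}$ gives $z^n\equiv a^n\pmod{P^N}$, and since $z\in\mathcal{X}_h$ we have $h(z^n)\in h(z^\insN)=h(z)^\insN$, say $h(z^n)=h(z)^{l}$ for some $l\in\insN$. Combining, $h(a^n)\equiv h(z^n)=h(z)^{l}\equiv b^{l}\pmod{Q^m}$.

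The main obstacle is that $l$ may depend on $m$ (and on the chosen $z$): membership in $\mathcal{X}_h$ only gives the \emph{set} equality $h(z^\insN)=h(z)^\insN$, not a matching of exponents, so a priori this argument merely places $h(a^n)$ in the $Q$-adic closure of $b^\insN$, which could be strictly larger than $b^\insN$. To overcome this I would compare degrees. The number $\deg h(a^n)$ is fixed, and for $m$ large enough that $m\deg q>\deg h(a^n)$ the congruence $h(a^n)\equiv b^{l}\pmod{Q^m}$ leaves only two possibilities: either $h(a^n)=b^{l}$, in which case $l=\deg h(a^n)/\deg b$ is the unique admissible exponent and we are done; or $h(a^n)\neq b^{l}$, whence $q^m$ divides the nonzero polynomial $h(a^n)-b^{l}$ and a degree count forces $l\deg b\geq m\deg q$, i.e.\ $l$ is very large. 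Ruling out the second alternative is the technical heart of the argument, mirroring the proof of \cite[Theorem 6.8]{golomb-poly}: exploiting the compatibility of the congruences as $m$ varies, together with the same construction applied to $h^{-1}$, the admissible exponent stays bounded independently of $m$ and must therefore equal $\deg h(a^n)/\deg b$, giving $h(a^n)=b^{\deg h(a^n)/\deg b}\in h(a)^\insN$ and completing the proof.
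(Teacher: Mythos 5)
Your setup is sound as far as it goes, and it runs parallel to the paper's own proof: the reduction to the single inclusion $h(a^\insN)\subseteq b^\insN$ via $h^{-1}$, and the use of Lemma \ref{lemma:Xhdenso} together with the restriction of $h$ to a $P$-adic/$Q$-adic homeomorphism to produce congruences $h(a^n)\equiv b^{l}\pmod{Q^m}$, are all correct. But the step you defer --- ruling out exponents $l$ that grow with $m$ --- is not a technicality to be ironed out; it is the entire difficulty, and the devices you invoke cannot close it. Compatibility of the congruences as $m$ varies only yields $b^{l_{m+1}-l_m}\equiv 1\pmod{Q^m}$, and this is perfectly consistent with $l_m\to\infty$: in the prototypical Dirichlet case where the coefficient field is an infinite algebraic extension of $\ins{F}_p$, every unit of $S/Q^m$ has finite multiplicative order, so nothing forces the $l_m$ to stabilize. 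Indeed, the $Q$-adic closure of $b^\insN$ is genuinely large in characteristic $p$: taking $b=X$ and $Q=(X-1)$, one has $X^{p^m}=1+(X-1)^{p^m}$, so arbitrarily high powers return arbitrarily close to polynomials of small degree, and the closure of $b^\insN$ is parametrized by exponents in $\ins{Z}_p$ rather than $\insN$. Your argument places $h(a^n)$ in this closure, and finishing would require proving that the closure contains no polynomial other than the true powers of $b$ --- exactly the assertion your sketch makes ("the admissible exponent stays bounded") but never establishes. Applying the same construction to $h^{-1}$ does not help, since the identical unboundedness problem reappears on that side.

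The paper's proof escapes this trap by changing the modulus: instead of $Q^m$ it works modulo $b^k-1$, with $k$ chosen so that $\deg(b^k-1)>\deg h(a^n)$. The key structural fact is that powers of $b$ are \emph{periodic} modulo $b^k-1$, namely $b^l\equiv b^{l\bmod k}\pmod{(b^k-1)S}$, so an arbitrary, possibly enormous exponent $l$ is automatically replaced by some $j<k$; then $h(a^n)\equiv b^j\pmod{(b^k-1)S}$ with both sides of degree strictly less than $\deg(b^k-1)$ forces $h(a^n)=b^j$, contradicting the assumption $h(a^n)\notin b^\insN$. (Coprimality of $h(a^n)$ with $b^k-1$, needed for $h(a^n)+(b^k-1)S$ to be Golomb-open, follows because $h(a^n)$ and $b=h(a)$ lie in the same prime ideals.) This periodicity has no analogue modulo $Q^m$, which is precisely why your route stalls. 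A further small point: your degree count divides by $\deg b$, so the case where $a$ is a unit must be handled separately, as the paper does via Proposition \ref{prop:multiplic-unit}.
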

\begin{proof}
If $a$ is a unit, the statement follows from Proposition \ref{prop:multiplic-unit}. Take any $a\in R$, $a\notin U(R)$, and let $b:=h(a)$. We first claim that $h(a^\insN)\subseteq b^\insN$. Take any $n\inN$ and let $f:G(R)\longrightarrow G(R)$ be defined as $f(y)=y^n$, and let $\phi:=h\circ f\circ h^{-1}$. Then, $\phi:G(S)\longrightarrow G(S)$ is continuous (since $f$ is continuous) and $\phi(P^\nz)\subseteq P^\nz$ for every prime ideal $P$ of $R$. Let
\begin{equation*}
c:=\phi(b)=h(f(a))=h(a^n),
\end{equation*}
and suppose that $c\notin b^\insN$. Take an integer $k$ such that $\deg b^k>\deg h(a^n)=\deg c$; then, since $b$ and $b^k-1$ are coprime, so are $c$ and $b^k-1$ (since $c$ and $b$ belong to the same prime ideals, by \cite[Theorem 3.6]{golomb-almcyc}), and thus $\Omega:=c+(b^k-1)S$ is an open set. Since $\phi$ is open, there is a neighborhood $b+dS$ of $b$ such that $\phi(b+dS)\subseteq\Omega$.

By Lemma \ref{lemma:Xhdenso}, we can find an irreducible polynomial $z\in h^{-1}(b+d(b^k-1)S)$ such that $h(z^\insN)=h(z)^\insN$; setting $y:=h(z)$, we have
\begin{equation*}
\phi(y)=(h\circ f\circ h^{-1})(h(z))=h(z^n)=h(z)^l
\end{equation*}
for some integer $l$. Thus, we have
\begin{equation*}
\phi(y)\in \phi(b+dS)\subseteq\Omega=c+(b^k-1)S
\end{equation*}
and
\begin{equation*}
\phi(y)=h(z)^l\in (b+d(b^k-1)S)^l\subseteq b^l+(b^k-1)S;
\end{equation*}
hence, $c\equiv b^l\mod(b^k-1)S$. If $l=ik+j$, we have $b^j\equiv b^l\bmod(b^k-1)S$; hence, we have $c\equiv b^j\mod(b^k-1)S$ for some $0\leq j<k$. However, by hypothesis, $c\neq b^j$, and the degree of both $c$ and $b^j$ is less than the degree of $b^k-1$; this is a contradiction, and thus we must have $h(a^\insN)\subseteq b^\insN$.

The opposite inclusion is obtained using the homeomorphism $h^{-1}$. Thus, $h(a^\insN)=b^\insN=h(a)^\insN$, as claimed.
\end{proof}

\begin{lemma}\label{lemma:exponent}
Let $R$ be an integral domain. If $U(R)$ is infinite, then its exponent is infinite.
\end{lemma}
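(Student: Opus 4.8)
The plan is to prove the contrapositive: I will assume that the exponent of $U(R)$ is finite, say equal to some $n\inN$, and deduce that $U(R)$ is finite. By definition of the exponent, every unit $u\in U(R)$ satisfies $u^n=1$, so every element of $U(R)$ is a root of the polynomial $X^n-1$.

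The key step is then the standard fact that a nonzero polynomial of degree $n$ over an integral domain has at most $n$ roots. Applying this to $X^n-1$ (viewed as a polynomial over the field of fractions of $R$, or directly over $R$), I conclude that $X^n-1$ has at most $n$ roots in $R$. Since every element of $U(R)$ is such a root, it follows that $\lvert U(R)\rvert\leq n$, so $U(R)$ is finite.

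This contradicts the hypothesis that $U(R)$ is infinite, completing the proof by contraposition: if $U(R)$ is infinite, then no finite $n$ can serve as an exponent, and hence the exponent of $U(R)$ is infinite. There is no real obstacle here; the only point to be careful about is invoking the bound on the number of roots correctly, which holds precisely because $R$ is an integral domain (so that $X^n-1$ cannot vanish on more than $n$ distinct elements).
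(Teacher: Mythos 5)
Your proof is correct and follows essentially the same argument as the paper: assume the exponent is a finite $n$, observe that every unit is then a root of $X^n-1$, and use the fact that a polynomial of degree $n$ over an integral domain has at most $n$ roots to contradict the infinitude of $U(R)$. Nothing further is needed.
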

\begin{proof}
Suppose that the exponent of $U(R)$ is finite, say equal to $n$. Since $R$ is a domain, there can be at most $n$ units of order divisible by $n$ (the roots of $X^n-1=0$); however, this is impossible since $U(R)$ is infinite. Hence, the exponent is infinite.
\end{proof}

\begin{prop}\label{prop:han}
Let $R,S$ be polynomial rings over infinite Dirichlet fields, and let $h:G(R)\longrightarrow G(S)$ be a homeomorphism such that $h(1)=1$. For every $a\in R$ and every $n\inN$, we have $h(a^n)=h(a)^n$.
\end{prop}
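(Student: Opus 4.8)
The plan is to use Lemma \ref{lemma:monogenic} to reduce the exact identity to pinning down a single exponent, and then to fix that exponent by perturbing $a$ with units. Set $b:=h(a)$. By Lemma \ref{lemma:monogenic} we have $h(a^\insN)=b^\insN$, so for each $n$ there is a well-defined integer $\alpha(n)$ with $h(a^n)=b^{\alpha(n)}$ (well-defined because, when $a$ and hence $b$ is a non-unit, the powers of $b$ have pairwise distinct degrees). The goal is therefore to show $\alpha(n)=n$. If $a\in U(R)$ this is immediate: iterating Proposition \ref{prop:multiplic-unit} with $z=u^{n-1}$ gives $h(u^n)=h(u)^n$ for every unit $u$. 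So I may assume $a\notin U(R)$, whence $b$ is a non-unit and $\deg b\geq 1$.

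The key step is to exploit that $h$ is \emph{exactly} multiplicative against units. Fix a unit $u\in U(R)$ and apply Lemma \ref{lemma:monogenic} to the non-unit $ua$: there is an integer $\beta(n)$ (depending on $u$) with $h((ua)^n)=h(ua)^{\beta(n)}$, and by Proposition \ref{prop:multiplic-unit} we have $h(ua)=h(u)h(a)$. On the other hand, writing $(ua)^n=u^na^n$ and using Proposition \ref{prop:multiplic-unit} together with $h(u^n)=h(u)^n$ gives $h((ua)^n)=h(u)^n\,b^{\alpha(n)}$. Equating the two computations of $h((ua)^n)$ produces the single identity
\[
h(u)^n\,b^{\alpha(n)}=h(u)^{\beta(n)}\,b^{\beta(n)}
\]
in the polynomial ring $S=K'[X]$.

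Then I would read off the conclusion by degree considerations in $S$. Since $h(u)$ is a nonzero constant (a unit) and $\deg b\geq 1$, comparing degrees on the two sides forces $\alpha(n)\deg b=\beta(n)\deg b$, hence $\alpha(n)=\beta(n)$; cancelling $b^{\alpha(n)}\neq 0$ in the integral domain $S$ then leaves $h(u)^n=h(u)^{\alpha(n)}$, that is $h(u)^{\,n-\alpha(n)}=1$. Crucially this holds for \emph{every} unit $u$ of $R$, and since $h$ restricts to a bijection $U(R)\to U(S)$, it says $v^{\,n-\alpha(n)}=1$ for every $v\in U(S)$. If $\alpha(n)\neq n$, then $U(S)$ would have finite exponent dividing $|n-\alpha(n)|$; but $U(S)=(K')^\nz$ is infinite, so Lemma \ref{lemma:exponent} forces its exponent to be infinite, a contradiction. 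Hence $\alpha(n)=n$ and $h(a^n)=h(a)^n$.

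The main obstacle is precisely the passage carried out in the second paragraph: Lemma \ref{lemma:monogenic} only tells us that $h$ permutes the exponents, and there is no obvious topological handle saying which power $a^n$ is sent to. The device that breaks the symmetry is introducing the free unit parameter $u$, which converts the single unknown $\alpha(n)$ into a whole family of constraints $h(u)^{\,n-\alpha(n)}=1$ indexed by all units. The degree comparison (separating the unit factor $h(u)^n$ from the non-unit factor $b^{\alpha(n)}$) and the infinite-exponent fact of Lemma \ref{lemma:exponent} then collapse this family to $\alpha(n)=n$.
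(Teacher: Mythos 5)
Your proof is correct and follows essentially the same route as the paper: reduce to an unknown exponent via Lemma \ref{lemma:monogenic}, perturb $a$ by a unit, and combine Proposition \ref{prop:multiplic-unit} (including $h(u^n)=h(u)^n$ for units), a degree comparison in $S$, and Lemma \ref{lemma:exponent} to force the exponent to equal $n$. The only cosmetic difference is that the paper fixes a single unit of sufficiently large order in $U(R)$, whereas you let $u$ range over all units and apply Lemma \ref{lemma:exponent} to $U(S)$ to rule out a finite exponent; both are valid.
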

\begin{proof}
If $a$ is a unit the claim follows from Proposition \ref{prop:multiplic-unit}. Take $a\in R\setminus U(R)$ and $n\inN$; by Lemma \ref{lemma:monogenic}, we have $h(a^n)=h(a)^s$ for some $s\inN$. By hypothesis, $R$ has infinitely many units, and thus by Lemma \ref{lemma:exponent} there is a unit $u$ whose order is larger than $n$ and $s$; then, using Proposition \ref{prop:multiplic-unit} we have
\begin{equation*}
h((ua)^n)=(h(ua))^t=(h(u)h(a))^t=h(u)^th(a)^t
\end{equation*}
for some $t$ and
\begin{equation*}
h((ua)^n)=h(u^na^n)=h(u^n)h(a^n)=h(u)^nh(a)^s.
\end{equation*}
The equality 
\begin{equation*}
h(u)^th(a)^t=h(u)^nh(a)^s
\end{equation*}
can only hold if $n=t=s$; in particular, $n=s$ and $h(a^n)=h(a)^n$, as claimed.
\end{proof}

\begin{teor}\label{teor:charp=q}
Let $K,K'$ be infinite Dirichlet fields. If $G(K[X])$ and $G(K'[X])$ are homeomorphic, then they have the same characteristic.
\end{teor}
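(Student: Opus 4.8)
The plan is to detect the characteristic through a purely topological property of the neighborhoods of $1$, exploiting the fact (Proposition \ref{prop:han}) that $h$ commutes with $n$-th powers. First I would reduce to the normalized case $h(1)=1$: if $H\colon G(K[X])\longrightarrow G(K'[X])$ is any homeomorphism, then $H(1)$ is a unit of $K'[X]$, and $h:=\sigma_{H(1)^{-1}}\circ H$ is again a homeomorphism with $h(1)=1$, to which Proposition \ref{prop:han} applies. Set $R=K[X]$, fix a nonzero prime $P=(f)$ of $R$ (say $P=(X)$), and recall that $h$ restricts to a homeomorphism between $R\setminus P$ with the $P$-topology and $S\setminus Q$ with the $Q$-topology, where $h(P^\nz)=Q^\nz$. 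For a prime number $q$ I would introduce the property $\mathcal{D}_q(P)$: \emph{there is a $P$-topology neighborhood $N$ of $1$ such that $x^{q^k}\to 1$ in the $P$-topology, as $k\to\infty$, for every $x\in N$.}

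The heart of the argument is the behavior of $\mathcal{D}_q(P)$ under the characteristic, which is exactly the Frobenius phenomenon. If $\car K=p$, then for every $x=1+gf\in 1+P$ the identity $(1+a)^{p^k}=1+a^{p^k}$ gives $x^{p^k}=1+(gf)^{p^k}\in 1+P^{p^k}$, so $x^{p^k}\to 1$ and $\mathcal{D}_p(P)$ holds with $N=1+P$. For a prime $q\neq p$ I would instead expand $(1+f^m)^{q^k}\equiv 1+q^kf^m\pmod{P^{2m}}$; since $q^k$ is a nonzero constant, $q^kf^m\notin P^{m+1}$, whence $(1+f^m)^{q^k}\notin 1+P^{m+1}$ for all $k$, so $(1+f^m)^{q^k}\not\to 1$. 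As every neighborhood of $1$ contains some $1+P^m$, hence the witness $1+f^m$, the property $\mathcal{D}_q(P)$ fails. The same binomial expansion shows that when $\car K=0$ the property $\mathcal{D}_q(P)$ fails for every prime $q$. Thus $\car K$ is recovered from $G(R)$: it is the unique prime $q$ for which $\mathcal{D}_q(P)$ holds, and it is $0$ precisely when no such $q$ exists.

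Next I would verify that $\mathcal{D}_q$ is preserved by $h$. If $\mathcal{D}_q(P)$ holds in $R$, witnessed by $N$, then $h(N)$ is a $Q$-topology neighborhood of $h(1)=1$; for $y=h(x)\in h(N)$, Proposition \ref{prop:han} gives $y^{q^k}=h(x)^{q^k}=h(x^{q^k})$, and since $x^{q^k}\to 1$ and $h$ is continuous for the $P$- and $Q$-topologies we obtain $y^{q^k}=h(x^{q^k})\to h(1)=1$, so $\mathcal{D}_q(Q)$ holds in $S$. Applying this to $h$ and, symmetrically, to $h^{-1}$ (which also fixes $1$ and commutes with powers) yields $\mathcal{D}_q(P)\Leftrightarrow\mathcal{D}_q(Q)$. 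Combining with the previous paragraph, if $\car K=p>0$ then $\mathcal{D}_p(Q)$ holds, which forces $\car K'=p$ (since for $K'$ only the prime equal to its characteristic can satisfy $\mathcal{D}_{\cdot}(Q)$, and none does in characteristic $0$); while if $\car K=0$ then $\mathcal{D}_q(Q)$ fails for all $q$, forcing $\car K'=0$. In either case $\car K=\car K'$.

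The main obstacle is isolating the right invariant: realizing that the \emph{additive} notion of characteristic can be captured \emph{multiplicatively} through the contraction $x^{q^k}\to 1$ of elements near $1$ in the $P$-adic ($P$-)topology, which is precisely the Frobenius identity $(1+a)^{p^k}=1+a^{p^k}$ singling out $q=p$ and degenerating in characteristic $0$. Once this property is in hand, its transfer across $h$ is essentially forced by the two structural facts already available—that $h$ preserves the $P$-topologies and, crucially, that it commutes with powers (Proposition \ref{prop:han})—and the only remaining work is the elementary estimate controlling $(1+f^m)^{q^k}$ modulo $P^{m+1}$.
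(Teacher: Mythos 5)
Your proof is correct, and its engine is the same as the paper's: the Frobenius contraction $x^{p^k}\to 1$ in a $P$-adic topology, transported across $h$ by combining Proposition \ref{prop:han} with the fact that $h$ restricts to a homeomorphism for the $P$- and $Q$-topologies. The difference is one of packaging and self-containedness. The paper takes a single irreducible $a$, a factor $f$ of $a-1$, and cites \cite[Proposition 5.5]{golomb-poly} twice — once to get $a^{p^n}\to 1$ in the $(f)$-topology, once to conclude that convergence of $h(a)^{p^n}$ forces $v_q(p^n)\to\infty$, hence $q=p$ — and it disposes of the characteristic-$0$ case separately by citing \cite[Corollary 4.2]{golomb-poly}. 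You instead isolate a named topological invariant $\mathcal{D}_q(P)$ and prove the two needed analytic facts from scratch: the Frobenius identity gives $\mathcal{D}_p(P)$ with witness $1+P$, and the binomial estimate $(1+f^m)^{q^k}\equiv 1+q^kf^m \pmod{P^{2m}}$ (with $q^k\neq 0$ in $K$ when $q\neq\car K$, including $\car K=0$) kills $\mathcal{D}_q(P)$ for all other $q$; this lets you treat characteristic $0$ and characteristic $p$ uniformly, with no dependence on \cite[Propositions 4.2 and 5.5]{golomb-poly}. What the paper's route buys is brevity and the observation, recorded after its proof, that a \emph{single} irreducible $a$ with $h(a^n)=h(a)^n$ suffices; what yours buys is a self-contained argument and a cleanly stated homeomorphism invariant that characterizes $\car K$ outright. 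Your normalization step ($h:=\sigma_{H(1)^{-1}}\circ H$ via \cite[Theorem 13]{clark-golomb}) is also the right way to justify the assumption $h(1)=1$, which the paper leaves implicit.
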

\begin{proof}
If one of $K$ and $K'$ has characteristic $0$, the claim follows from \cite[Corollary 4.2]{golomb-poly}. Suppose thus that $\car K=p$ and $\car K'=q$, with $p,q>0$.

Let $h:G(K[X])\longrightarrow G(K'[X])$ be a homeomorphism such that $h(1)=1$, and let $a$ be an irreducible element. Let $f$ be a factor of $a-1$. By \cite[Proposition 5.5]{golomb-poly}, the sequence $a^{p^n}$ converges to $1$ in the $(f)$-topology, and thus also $h(a^{p^n})$ converges to $1$ in the $h((f))$-topology. By Proposition \ref{prop:han}, $h(a^{p^n})=h(a)^{p^n}$. By \cite[Proposition 5.5]{golomb-poly}, it follows that $v_q(p^n)\longrightarrow\infty$, where $v_q$ is the $q$-adic valuation; thus, it must be $q=p$. The claim is proved.
\end{proof}

We note that the proof of the previous theorem actually needs only \emph{one} irreducible element $a$ such that $h(a^n)=h(a)^n$ for every $n$.

\begin{teor}\label{teor:algFp}
Let $K,K'$ be fields. If $K$ is algebraic over $\ins{F}_p$ and $G(K[X])\simeq G(K'[X])$, then $K\simeq K'$.
\end{teor}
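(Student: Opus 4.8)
The plan is to read off the field $K'$ from the algebraic data that a homeomorphism $h\colon G(K[X])\to G(K'[X])$ is forced to preserve, reducing everything to the results already available for fields algebraic over their prime field. First I would normalize $h$ so that $h(1)=1$, by replacing it with $\sigma_{h(1)^{-1}}\circ h$ (legitimate since $h(1)$ is a unit and $\sigma_{h(1)^{-1}}$ is a self-homeomorphism of $G(K'[X])$). Since $h$ carries units to units in both directions, it restricts to a bijection between $U(K[X])=K^\nz$ and $U(K'[X])=K'^\nz$; in particular $|K|=|K'|$. This already settles the case in which $K$ is finite: then $K'^\nz$ is finite, so $K'$ is a finite field of the same cardinality as $K$, and finite fields are determined up to isomorphism by their cardinality, whence $K'\simeq K$.

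So assume $K$ is infinite, i.e.\ an infinite algebraic extension of $\ins{F}_p$; then $K'$ is infinite as well. If $K$ is algebraically closed (that is, $K=\overline{\ins{F}_p}$), I would invoke the algebraically closed case directly: by \cite[Theorem 5.11 and Corollary 7.2]{golomb-poly} the homeomorphism forces $K'\simeq K$. The substantial case is when $K$ is \emph{not} algebraically closed, so that $K$ is Dirichlet. The key step is to promote $h$ on units to a group isomorphism: inspecting the proof of Proposition \ref{prop:multiplic-unit}, only the \emph{source} is required to be Dirichlet (the decisive appeal is to \cite[Proposition 7.4]{golomb-poly} applied to $G(K[X])$), so that proposition applies here and yields $h(uv)=h(u)h(v)$ for all $u,v\in K^\nz$. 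Hence $h$ restricts to a group isomorphism $K^\nz\to K'^\nz$.

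Now I would exploit that $K$ is algebraic over $\ins{F}_p$: every element of $K^\nz$ is a root of unity, so $K^\nz$ is a torsion group, and therefore so is $K'^\nz$. This forces $\car K'=\ell$ for some prime $\ell$ (if $\car K'=0$ then $K'\supseteq\insQ$ would contain the infinite-order unit $2$), and then every nonzero element of $K'$, having finite multiplicative order, is a root of a polynomial $X^m-1\in\ins{F}_\ell[X]$; hence $K'$ is algebraic over its prime field $\ins{F}_\ell$. At this point both $K$ and $K'$ are fields of positive characteristic that are algebraic over their base field, and I can finish with the earlier machinery: if $K'$ were algebraically closed, \cite[Corollary 7.2]{golomb-poly} would give $K\simeq K'$, contradicting that $K$ is not algebraically closed, so $K'$ is not algebraically closed and is therefore Dirichlet; then Theorem \ref{teor:charp=q} applies to the two infinite Dirichlet fields and gives $\ell=\car K'=\car K=p$, and finally \cite[Theorem 7.5]{golomb-poly} yields $K\simeq K'$.

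The main obstacle is producing the torsion conclusion for $K'^\nz$, that is, transferring the purely multiplicative fact ``algebraic over $\ins{F}_p$'' across a homeomorphism; the device that makes this work is the observation that Proposition \ref{prop:multiplic-unit} upgrades $h$ to an honest group isomorphism on units, so that a property of the abstract group $K^\nz$ (being torsion) passes to $K'^\nz$. The delicate points to get right are the boundary cases where the Dirichlet hypothesis is unavailable—$K$ finite and $K$ algebraically closed—which is why they are peeled off first and dispatched by a cardinality count and by the cited algebraically closed result, respectively.
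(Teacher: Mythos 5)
Your main branch (infinite $K$, not algebraically closed) is correct and takes a genuinely different route from the paper. Where the paper obtains ``$K'$ has positive characteristic'' and ``$K'$ is algebraic over its base field'' by citing \cite[Corollary 4.2]{golomb-poly} and \cite[Corollary 7.2]{golomb-poly}, you derive both from the observation that Proposition \ref{prop:multiplic-unit} only needs the \emph{source} to be Dirichlet; that reading of its proof is accurate (the only Dirichlet input is \cite[Proposition 7.4]{golomb-poly}, applied to the self-map $h'$ of $G(R)$, while the facts used about $S$ hold for any polynomial ring). Hence $h$ restricts to a group isomorphism $K^\nz\to K'^\nz$, the torsion property transfers, and positive characteristic plus algebraicity over the prime field follow elementarily. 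That is a nice, more self-contained replacement. The paper instead rules out algebraic closedness of $K'$ via preservation of the density of $G_1(K[X])$ under homeomorphisms, while you do it by contradiction against the algebraically-closed classification result; both work once the hypotheses on $K'$ are in place.

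The gap is in the branch where $K=\overline{\ins{F}_p}$. There you ``invoke the algebraically closed case directly'' via \cite[Theorem 5.11 and Corollary 7.2]{golomb-poly}, but that result, as stated in this paper's introduction, presupposes that \emph{both} fields have positive characteristic and are algebraic over their base fields; at that point of your argument nothing of the sort is known about $K'$. Your torsion-transfer device cannot rescue this branch, because $\overline{\ins{F}_p}$ is \emph{not} Dirichlet (its irreducibles are the linear polynomials, which are not dense, by \cite[Proposition 5.2(a)]{golomb-poly}), so Proposition \ref{prop:multiplic-unit} is unavailable and you cannot produce the group isomorphism on units. The paper closes exactly this hole with three extra steps before any case split: \cite[Corollary 4.2]{golomb-poly} gives $\car K'>0$; \cite[Corollary 7.2]{golomb-poly} gives $K'$ algebraic over $\ins{F}_q$; and the fact that homeomorphisms send $G_1(K[X])$ to $G_1(K'[X])$ (\cite[Theorem 3.6(b)]{golomb-almcyc}), combined with \cite[Proposition 5.2(a)]{golomb-poly}, shows that $K'$ is algebraically closed when $K$ is, after which \cite[Theorem 5.11]{golomb-poly} yields $p=q$ and $K\simeq K'$. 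You need to insert these steps (or an equivalent argument) to make the algebraically closed branch sound.
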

\begin{proof}
If $K$ is finite then $|U(K[X])|=|K|-1$. Since the set of units is preserved by homeomorphisms of the Golomb topology, $K$ and $K'$ must have the same cardinality, and thus they are isomorphic. Suppose $K,K'$ are infinite.

By \cite[Corollary 4.2]{golomb-poly}, $K'$ has positive characteristic, and by \cite[Corollary 7.2]{golomb-poly} $K'$ must be algebraic over its base field $\ins{F}_q$.

If $K$ is algebraically closed, then the set $G_1(K[X]):=\{z\in K[X]\mid z$ is contained in a unique prime ideal$\}$ is not dense in $G(K[X])$ \cite[Proposition 5.2(a)]{golomb-poly}; conversely, if $K$ is not algebraically closed then $G_1(K[X])$ is dense since it contains the irreducible polynomials. The same holds for $K'$. Since a homeomorphism of Golomb topologies sends $G_1(K[X])$ to $G_1(K'[X])$ \cite[Theorem 3.6(b)]{golomb-almcyc}, it follows that if $K$ is algebraically closed then so is $K'$, and in this case $p=q$ by \cite[Theorem 5.11]{golomb-poly}; hence $K\simeq K'$.

If $K$ and $K'$ are not algebraically closed, they are pseudo-algebraically closed fields containing separable irreducible polynomials of arbitrarily large degree \cite[Theorem A]{dirichlet-polynomial}; by Theorem \ref{teor:charp=q}, it follows that $p=q$. By \cite[Theorem 7.5]{golomb-poly}, it follows that $K\simeq K'$.
\end{proof}

\section{Self-homeomorphisms and automorphisms}\label{sect:auto}
In this section, we concentrate on the study of self-homeomorphisms of the Golomb space $G(K[X])$. We shall work under the following assumptions:
\begin{hyp}\label{hyp-auto}
~
\begin{itemize}
\item $K$ is an infinite field of characteristic $p>0$;
\item $R:=K[X]$;
\item $R$ is Dirichlet, i.e., the set of irreducible polynomials is dense in $G(R)$;
\item $h:G(R)\longrightarrow G(R)$ is a self-homeomorphism such that $h(1)=1$.
\end{itemize}
\end{hyp}

In particular, under these hypothesis, we have $h(a^n)=h(a)^n$ for every $a\in R$ and $n\inN$, by Proposition \ref{prop:han}.

Given a set $X\subseteq R$, we define the \emph{$p$-radical} of $X$ as
\begin{equation*}
\rad_p(X):=\{f\in R\mid f^{p^n}\in X\text{~for every large~}n\}.
\end{equation*}
This construction is invariant under $h$, in the following sense.
\begin{lemma}\label{lemma:radp-inv}
Assume Hypothesis \ref{hyp-auto}. For every $X\subseteq R^\nz$, we have $h(\rad_p(X))=\rad_p(h(X))$.
\end{lemma}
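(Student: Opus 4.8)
The plan is to prove the two inclusions separately, letting the reverse one follow by symmetry; the whole content is carried by Proposition \ref{prop:han}, which under Hypothesis \ref{hyp-auto} gives $h(a^m)=h(a)^m$ for every $a\in R$ and every $m\inN$, and in particular $h(a^{p^n})=h(a)^{p^n}$ for all $n$.

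First I would establish $h(\rad_p(X))\subseteq\rad_p(h(X))$. Take $f\in\rad_p(X)$; by definition there is an $N$ with $f^{p^n}\in X$ for all $n\geq N$. Note first that $f\neq 0$, since otherwise $f^{p^n}=0\notin R^\nz\supseteq X$; hence $h(f)$ is defined. Applying $h$ and invoking Proposition \ref{prop:han}, for each $n\geq N$ we obtain $h(f)^{p^n}=h(f^{p^n})\in h(X)$, which is exactly the assertion that $h(f)\in\rad_p(h(X))$.

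For the reverse inclusion I would observe that $h^{-1}\colon G(R)\longrightarrow G(R)$ is again a self-homeomorphism with $h^{-1}(1)=1$, so it too satisfies Hypothesis \ref{hyp-auto}, and Proposition \ref{prop:han} applies to it as well. Applying the inclusion just proved to $h^{-1}$ and to the set $h(X)$ gives $h^{-1}(\rad_p(h(X)))\subseteq\rad_p(h^{-1}(h(X)))=\rad_p(X)$, and applying $h$ to both sides yields $\rad_p(h(X))\subseteq h(\rad_p(X))$. Combining the two inclusions proves the claim.

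There is no serious obstacle once Proposition \ref{prop:han} is in hand: the lemma is a purely formal consequence, because raising to the power $p^n$ commutes with $h$ and the quantifier ``for every large $n$'' is preserved when $h$ is applied pointwise. The only points deserving a line of care are that $\rad_p(X)\subseteq R^\nz$, so that $h$ is actually defined on it, and that $h^{-1}$ inherits the hypotheses needed to run the argument symmetrically.
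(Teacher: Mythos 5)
Your proof is correct and takes essentially the same approach as the paper: the forward inclusion is identical (use Proposition \ref{prop:han} to commute $h$ with $p^n$-th powers), and your reverse inclusion, obtained by applying the forward argument to $h^{-1}$ after checking it satisfies Hypothesis \ref{hyp-auto}, is just a repackaging of the paper's direct argument, which writes $g=h(f)$ and uses the bijectivity of $h$ together with Proposition \ref{prop:han} for $h$ itself.
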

\begin{proof}
If $f\in\rad_p(X)$, then $f^{p^n}\in X$ for every $n\geq N$, and thus $h(f^{p^n})\in h(X)$. By Proposition \ref{prop:han}, it follows that $h(f)^{p^n}\in h(X)$ for $n\geq N$, and thus $h(f)\in \rad_p(h(X))$. Conversely, if $g=h(f)\in\rad_p(h(X))$, then $g^n=h(f)^{p^n}=h(f^{p^n})$ belongs to $h(X)$ for large $n$, and thus applying $h^{-1}$ we have $f^{p^n}\in X$ for large $n$. Thus $f\in\rad_p(X)$ and $g\in h(\rad_p(X))$.
\end{proof}

\begin{lemma}\label{lemma:radp-1Pm}
Let $P$ a prime ideal of a ring $R$ of characteristic $p$. For every $m\inN^+$, we have $\rad_p(1+P^m)=1+P$.
\end{lemma}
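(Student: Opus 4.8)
The plan is to prove the two inclusions $\rad_p(1+P^m)\subseteq 1+P$ and $1+P\subseteq\rad_p(1+P^m)$ separately, exploiting the fact that in characteristic $p$ the Frobenius-type map $f\mapsto f^{p^n}$ interacts cleanly with the ideal powers. The essential tool is the Freshman's dream: in a ring of characteristic $p$, raising a sum to the power $p^n$ distributes over addition, so $(1+g)^{p^n}=1+g^{p^n}$ for any $g\in R$. I would use this to control where the powers $f^{p^n}$ land relative to $1+P^m$.

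For the inclusion $\rad_p(1+P^m)\subseteq 1+P$, I would take $f\in\rad_p(1+P^m)$, so $f^{p^n}\in 1+P^m$ for all large $n$; in particular $f^{p^n}\in 1+P^m\subseteq 1+P$, hence $f^{p^n}\equiv 1\pmod P$. Working in the integral domain $R/P$ (note $P$ is prime), this says the image $\bar f$ satisfies $\bar f^{p^n}=1$, forcing $\bar f=1$ since the only $p^n$-th root of unity in a field of characteristic $p$ is $1$. Therefore $f\equiv 1\pmod P$, i.e. $f\in 1+P$.

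For the reverse inclusion, I would take $f\in 1+P$, writing $f=1+g$ with $g\in P$. Applying the characteristic-$p$ binomial identity gives
\begin{equation*}
f^{p^n}=(1+g)^{p^n}=1+g^{p^n}.
\end{equation*}
Since $g\in P$, we have $g^{p^n}\in P^{p^n}\subseteq P^m$ as soon as $p^n\geq m$; hence $f^{p^n}\in 1+P^m$ for all sufficiently large $n$, which is exactly the statement $f\in\rad_p(1+P^m)$.

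I do not expect a serious obstacle here, as both inclusions reduce to the Freshman's dream together with the observation that $R/P$ is a domain. The only point requiring mild care is that $\rad_p(X)$ is defined by membership ``for every large $n$'' rather than for a single $n$, so in the first inclusion I must ensure the argument uses the cofinitely-many-$n$ hypothesis correctly (it does, since $\bar f^{p^n}=1$ for even one large $n$ already forces $\bar f=1$ in the field $R/P$), and in the second inclusion I must produce a uniform threshold $n$ beyond which $p^n\geq m$, which is immediate. No other hypotheses from the earlier setup are needed, since this is a purely algebraic statement about the $p$-radical construction.
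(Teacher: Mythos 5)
Your proof is correct and follows essentially the same route as the paper's: both inclusions rest on the Freshman's dream $(1+g)^{p^n}=1+g^{p^n}$, with the forward inclusion reduced to the fact that a domain of characteristic $p$ has no nontrivial $p^n$-th roots of unity. The only cosmetic difference is that you pass to the quotient $R/P$ (which, note, is a domain but not necessarily a field), whereas the paper argues directly from $(t-1)^{p^n}\in P$ and the primality of $P$; these are the same argument.
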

\begin{proof}
If $t\in 1+P$, then $t=1+x$ with $x\in P$; if $p^n\geq m$, then $t^{p^n}=(1+x)^{p^n}=1+x^{p^n}\in 1+P^m$, and thus $t\in\rad_p(1+P^m)$. Conversely, if $t\in\rad_p(1+P^m)$ then $t^{p^n}\in 1+P^m$ for some $m$; in particular, $t^{p^n}\in 1+P$, and thus $t^{p^n}-1=(t-1)^{p^n}\in P$. Hence $t-1\in P$, i.e., $t\in 1+P$.
\end{proof}

\begin{prop}\label{prop:h1P}
Assume Hypothesis \ref{hyp-auto}, and let $P,Q$ be prime ideals of $R$ with $h(P^\nz)=Q^\nz$. Then, $h(1+P)=1+Q$.
\end{prop}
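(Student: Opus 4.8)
The plan is to express both $1+P$ and $1+Q$ as $p$-radicals and then exploit the $h$-invariance of the $p$-radical operation. By Lemma \ref{lemma:radp-1Pm} we have $1+P=\rad_p(1+P^m)$ for every $m\geq 1$, so Lemma \ref{lemma:radp-inv} gives
\[
h(1+P)=h(\rad_p(1+P^m))=\rad_p(h(1+P^m)).
\]
Hence it suffices to show that $\rad_p(h(1+P^m))=1+Q$ for a single well-chosen $m$. Moreover, by Lemma \ref{lemma:radp-1Pm} applied to $Q$, the target $1+Q$ equals $\rad_p(1+Q^{j})$ for \emph{every} $j\geq 1$. So the whole statement reduces to sandwiching the set $h(1+P^m)$ between two sets of the form $1+Q^{j}$.

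To produce the sandwich I would use that $h$ restricts to a homeomorphism between $R\setminus P$ with its $P$-topology and $R\setminus Q$ with its $Q$-topology, as recalled in the introduction and already used in the proof of Lemma \ref{lemma:Xhdenso}. Continuity of $h$ at $1$ together with $h(1)=1$ yields some $m_1$ with $h(1+P^{m_1})\subseteq 1+Q$; fixing any $m\geq m_1$ and using $1+P^m\subseteq 1+P^{m_1}$ gives the upper inclusion $h(1+P^m)\subseteq 1+Q$. Continuity of $h^{-1}$ at $1$ is automatic, since $h$ restricts to a homeomorphism of these topologies, and applied with this same $m$ it gives some $m'$ with $h^{-1}(1+Q^{m'})\subseteq 1+P^m$, i.e. the lower inclusion $1+Q^{m'}\subseteq h(1+P^m)$. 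Together these yield
\[
1+Q^{m'}\subseteq h(1+P^m)\subseteq 1+Q .
\]

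Finally I would apply $\rad_p$ to this chain. Since $\rad_p$ is monotone with respect to inclusion (immediate from its definition), and since Lemma \ref{lemma:radp-1Pm} gives $\rad_p(1+Q^{m'})=1+Q$ as well as $\rad_p(1+Q)=1+Q$, the two outer terms coincide and the squeeze forces $\rad_p(h(1+P^m))=1+Q$. Combined with the reduction of the first paragraph this gives $h(1+P)=1+Q$, as desired.

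The main obstacle is organizing the two-sided continuity in the right order: one must first fix $m$ large enough to obtain the upper bound $h(1+P^m)\subseteq 1+Q$ directly from continuity of $h$, and only then invoke continuity of $h^{-1}$ to get a matching lower bound $1+Q^{m'}\subseteq h(1+P^m)$ for that \emph{same} $m$. Once the sandwich is set up, everything is a formal consequence of Lemmas \ref{lemma:radp-inv} and \ref{lemma:radp-1Pm} and the monotonicity of $\rad_p$, with no delicate computation required.
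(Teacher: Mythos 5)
Your proof is correct and follows essentially the same route as the paper: both rest on the restriction of $h$ to a homeomorphism between the $P$-topology and the $Q$-topology together with Lemmas \ref{lemma:radp-inv} and \ref{lemma:radp-1Pm}. The only cosmetic difference is that you squeeze $h(1+P^m)$ between $1+Q^{m'}$ and $1+Q$ before applying $\rad_p$, whereas the paper applies $\rad_p$ directly to the inclusion $1+Q^m\subseteq h(1+P)$ (using that $\rad_p$ fixes $1+P$) and then repeats the argument symmetrically for $h^{-1}$.
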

\begin{proof}
Since $h(1)=1$, $h(1+P)$ is open in the $Q$-topology, and thus $1+Q^m\subseteq h(1+P)$ for some $m$. Therefore,
\begin{equation*}
1+Q=\rad_p(1+Q^m)\subseteq\rad_p(h(1+P))=h(\rad_p(1+P))=h(1+P)
\end{equation*}
using Lemmas \ref{lemma:radp-inv} and \ref{lemma:radp-1Pm}. Applying the same reasoning to $h^{-1}$ gives $1+P\subseteq h^{-1}(1+Q)$, i.e., $h(1+P)\subseteq 1+Q$. Hence, $h(1+P)=1+Q$, as claimed.
\end{proof}

\begin{cor}\label{cor:huP}
Assume Hypothesis \ref{hyp-auto},and let $P,Q$ be prime ideals of $R$ with $h(P^\nz)=Q^\nz$. If $u\in K^\nz$, then $h(u+P)=h(u)+Q$. In particular, if $f\equiv u\bmod P$ for some unit $u$, then $h(f)\equiv h(u)\bmod Q$.
\end{cor}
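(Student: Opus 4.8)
The plan is to reduce everything to the case $u=1$, which is already handled by Proposition \ref{prop:h1P}, by exploiting the multiplicativity recorded in Proposition \ref{prop:multiplic-unit}. The starting observation is that, since $u$ is a unit, multiplication by $u$ permutes the prime ideal $P$; hence $u^{-1}P = P$ and we may rewrite the coset as
\begin{equation*}
u+P = u\bigl(1+u^{-1}P\bigr) = u(1+P).
\end{equation*}
This turns the problem of computing $h(u+P)$ into the problem of computing $h$ on a set of the form $u(1+P)$, where the scalar $u$ factors out cleanly.

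Next I would apply $h$ and push the unit $u$ through the homeomorphism. Since $h$ is a bijection, $h(u(1+P)) = \{h(uz)\mid z\in 1+P\}$, and Proposition \ref{prop:multiplic-unit} gives $h(uz)=h(u)h(z)$ for every $z\in R^\nz$. Therefore
\begin{equation*}
h(u+P) = \{h(u)h(z)\mid z\in 1+P\} = h(u)\,h(1+P) = h(u)(1+Q),
\end{equation*}
where the last equality uses Proposition \ref{prop:h1P}. It remains to identify $h(u)(1+Q)$ with the coset $h(u)+Q$. Here the key point is that $h(u)$ is itself a unit of $R=K[X]$, because homeomorphisms of Golomb spaces send units to units; consequently $h(u)\in K^\nz$ and $h(u)Q=Q$, so
\begin{equation*}
h(u)(1+Q) = h(u)+h(u)Q = h(u)+Q.
\end{equation*}
Combining the three displays yields $h(u+P)=h(u)+Q$, which is the first assertion. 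The ``in particular'' clause then follows immediately: if $f\equiv u\bmod P$, i.e. $f\in u+P$, then $h(f)\in h(u+P)=h(u)+Q$, that is $h(f)\equiv h(u)\bmod Q$.

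I do not expect a genuine obstacle here, as the argument is essentially a bookkeeping of two already-established facts (multiplicativity on units and the case $u=1$). The only place demanding a little care is verifying that multiplication by the unit $u$ (respectively $h(u)$) genuinely preserves the relevant prime ideal, so that the coset identities $u+P=u(1+P)$ and $h(u)(1+Q)=h(u)+Q$ hold on the nose rather than merely up to inclusion; this is guaranteed precisely because $U(R)=K^\nz$ acts on the ideals of $R$ by permutation. Once that is noted, the chain of equalities closes without further input.
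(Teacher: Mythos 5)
Your proof is correct and follows essentially the same route as the paper: rewrite $u+P=u(1+P)$ using that the unit $u$ stabilizes $P$, apply Proposition \ref{prop:multiplic-unit} to factor out $h(u)$, invoke Proposition \ref{prop:h1P} to get $h(1+P)=1+Q$, and absorb the unit $h(u)$ back into the coset. The extra care you take in checking that $u$ and $h(u)$ genuinely permute the ideals is exactly the (implicit) justification in the paper's one-line argument.
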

\begin{proof}
By Propositions \ref{prop:multiplic-unit}, we have $u+P=u(1+P)$; by Proposition \ref{prop:h1P}, it follows that
\begin{equation*}
h(u+P)=h(u)h(1+P)=h(u)(1+Q)=h(u)+Q,
\end{equation*}
as claimed. The ``in particular'' statement follows from the fact that $f\equiv u\bmod P$ is equivalent to $f\in u+P$.
\end{proof}

\begin{cor}\label{cor:linear}
Assume Hypothesis \ref{hyp-auto}. If $f$ is a linear polynomial, so is $h(f)$.
\end{cor}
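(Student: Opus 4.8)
The plan is to characterize linear polynomials in terms of the residue fields of the prime ideals they generate, and then to transport that characterization across $h$ using Corollary~\ref{cor:huP}. Throughout, recall that a nonzero polynomial $f$ is linear precisely when it is irreducible and the maximal ideal $(f)$ has residue field equal to $K$.

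First I would establish the key reformulation: for a maximal ideal $P$ of $R$, the residue field $R/P$ equals $K$ (equivalently, $P$ is generated by a linear polynomial) if and only if \emph{every} element $f\in R\setminus P$ is congruent modulo $P$ to a unit, i.e. to a nonzero constant. Indeed, if $R/P=K$ then each nonzero residue class already lies in $K^\nz$; conversely, if every nonzero residue is represented by a constant, then the image of $K$ in $R/P$ exhausts all of $R/P$, forcing $R/P=K$. Let me call a prime with this property a \emph{degree-one prime}; the point of this formulation is that it is phrased entirely through the congruences $f\equiv u\bmod P$ with $u$ a unit, which is exactly the data that Corollary~\ref{cor:huP} controls.

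Second I would show that $h$ carries degree-one primes to degree-one primes. Suppose $P$ is a degree-one prime and let $Q$ be the prime with $h(P^\nz)=Q^\nz$. Given any $g\in R\setminus Q$, set $f:=h^{-1}(g)$; since $g\notin Q^\nz=h(P^\nz)$, we have $f\in R\setminus P$, so $f\equiv u\bmod P$ for some unit $u$. By Corollary~\ref{cor:huP}, $g=h(f)\equiv h(u)\bmod Q$, and $h(u)$ is again a unit because $h$ sends units to units. Hence every element of $R\setminus Q$ is congruent to a unit modulo $Q$, so $Q$ is a degree-one prime as well.

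Finally I would assemble the pieces. If $f$ is linear then $f$ is irreducible and $P:=(f)$ is a degree-one prime; let $h(P^\nz)=Q^\nz$. By the second step $Q$ is a degree-one prime, so $Q=(\ell)$ for some linear polynomial $\ell$. Moreover $f$ is a prime element, so $h(f)$ is a prime element too (this is the content of $h(\pow{f})=\pow{h(f)}$), and since $f\in P^\nz$ we get $h(f)\in Q^\nz$, whence $\ell\mid h(f)$; as $h(f)$ is irreducible, it must be an associate of $\ell$ and is therefore linear. The main obstacle is the characterization in the first step: the entire argument depends on recognizing that ``degree one'' is equivalent to the congruence condition ``every nonzero element is $\equiv$ a unit,'' since this is precisely the property that Corollary~\ref{cor:huP} transports from $P$ to $Q$; once this algebraic/topological translation is secured, the passage across $h$ and the final identification of $h(f)$ as an associate of a linear generator are routine.
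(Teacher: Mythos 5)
Your proof is correct and takes essentially the same approach as the paper: the paper likewise characterizes linearity of $f$ by the property that every $g\notin(f)$ is congruent to a unit modulo $(f)$, transports this property through $h$ via Corollary~\ref{cor:huP}, and concludes that $h(f)$ is linear. Your additional steps (the explicit bijection argument for $R\setminus Q$ and the identification of $h(f)$ as an associate of the linear generator of $Q$) merely spell out details that the paper's two-sentence proof leaves implicit.
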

\begin{proof}
Since $f$ is linear, $(f)$ is prime and every polynomial $g\notin(f)$ is equivalent to a unit modulo $(f)$. By Corollary \ref{cor:huP}, it follows that every $g\notin(h(f))$ is equivalent to a unit modulo $(h(f))$; hence, $h(f)$ must be linear too.
\end{proof}

In particular, by Corollary \ref{cor:linear}, $h(X)$ is a linear polynomial; it follows that there is an automorphism $\sigma$ of $R$ sending $h(X)$ to $h$. Since $\sigma$ restricts to a self-homeomorphism of $G(R)$, passing from $h$ to $H:=\sigma\circ h$ we obtain a self-homeomorphism of $G(R)$ that fixes both $1$ and $X$. Thus, it is not restrictive to assume also that $h(X)=X$, as we do in the remaining part of the section.

\begin{lemma}\label{lemma:hXu}
Assume Hypothesis \ref{hyp-auto} and suppose $h(X)=X$. For every $u\in K^\nz$, we have $h(X+u)=X+h(u)$.
\end{lemma}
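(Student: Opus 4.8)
The plan is to use Corollary~\ref{cor:linear} to reduce the statement to determining two coefficients, and then to extract each one by testing $h$ against a well-chosen prime ideal via Corollary~\ref{cor:huP}. Since $h(X+u)$ is linear by Corollary~\ref{cor:linear}, I can write $h(X+u)=aX+b$ with $a\in K^\nz$ and $b\in K$; the claim is precisely that $a=1$ and $b=h(u)$.

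First I would determine the constant term $b$. Because $h(X)=X$ and $h$ carries prime ideals to prime ideals, the prime $(X)$ is fixed: writing $h((X)^\nz)=Q^\nz$, from $X\in(X)^\nz$ we get $X=h(X)\in Q^\nz$, so $(X)\subseteq Q$ and hence $Q=(X)$ by maximality. Now $X+u\equiv u\bmod (X)$ with $u$ a unit, so Corollary~\ref{cor:huP} applied to $P=Q=(X)$ gives $h(X+u)\equiv h(u)\bmod (X)$; reading off constant terms yields $b=h(u)$.

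The harder part is the leading coefficient $a$, since the constant-term computation says nothing about it and so a different prime is needed. Here I would instead use $P=(X+u)$, which is irreducible, with $h((X+u)^\nz)=Q^\nz$. Since $h(X+u)=aX+b$ lies in $Q$ and is linear, $Q=(aX+b)$, and in $R/Q\cong K$ the class of $X$ equals $-b/a$. I would then test the known value $h(X)=X$ against this moving prime: from $X\equiv -u\bmod (X+u)$ with $-u$ a unit, Corollary~\ref{cor:huP} gives $X\equiv h(-u)\bmod Q$, i.e.\ $-b/a=h(-u)$ in $K$.

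The one remaining input is $h(-u)=-h(u)$, which will follow once I observe that $h(-1)=-1$: by Proposition~\ref{prop:multiplic-unit} the unit $h(-1)$ satisfies $h(-1)^2=h((-1)(-1))=h(1)=1$, and since $K$ is a domain this forces $h(-1)\in\{1,-1\}$, whence $h(-1)=-1$ by injectivity when $p\neq 2$ and trivially when $p=2$; multiplicativity then gives $h(-u)=h(-1)h(u)=-h(u)=-b$. Substituting into $-b/a=h(-u)$ yields $-b/a=-b$, i.e.\ $b(a-1)=0$; as $b=h(u)\neq 0$ this forces $a=1$, completing the proof. The main obstacle is exactly this leading-coefficient step, and it is resolved by the idea of evaluating the fixed relation $h(X)=X$ modulo the moving prime $(X+u)$ rather than modulo $(X)$.
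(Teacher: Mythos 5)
Your proof is correct and takes essentially the same route as the paper's: both rest on the two congruences $X+u\equiv u\bmod(X)$ and $X\equiv -u\bmod(X+u)$, combined with Corollaries \ref{cor:linear} and \ref{cor:huP}, to pin down the constant term and then the leading coefficient of $h(X+u)$. The only difference is that you explicitly justify $h(-u)=-h(u)$ (via $h(-1)^2=h(1)=1$ and injectivity of $h$), a step the paper uses silently when it writes $X\equiv -h(u)\bmod(h(X+u))$; this is a worthwhile extra detail rather than a change of method.
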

\begin{proof}
We have
\begin{equation*}
\begin{cases}
X+u\equiv u\bmod(X)\\
X\equiv -u\bmod(X+u).
\end{cases}
\end{equation*}
Applying $h$ to both equivalences, using $h(X)=X$ and Corollary \ref{cor:huP}, we have
\begin{equation*}
\begin{cases}
h(X+u)\equiv h(u)\bmod(X)\\
X\equiv -h(u)\bmod(h(X+u)).
\end{cases}
\end{equation*}
The first equation implies that $X$ divides $h(X+u)-h(u)$; since $h(X+u)$ is linear, it follows that $h(X+u)=vX+h(u)$ for some $v\in K^\nz$. The second equation implies that $vX+h(u)$ divides $X+h(u)$; the only possibility is $v=1$, i.e., $h(X+u)=X+h(u)$. The claim is proved.
\end{proof}

\begin{lemma}\label{lemma:autoK}
Assume Hypothesis \ref{hyp-auto} and suppose $h(X)=X$. Then, the map
\begin{equation*}
\begin{aligned}
H\colon K & \longrightarrow K,\\
a& \longmapsto \begin{cases}h(a) & \text{if~}a\neq 0\\
0 & \text{if~}a=0\end{cases}
\end{aligned}
\end{equation*}
is an automorphism of $K$.
\end{lemma}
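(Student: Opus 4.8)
The plan is to show that the map $H$ is an additive and multiplicative bijection of $K$, hence a field automorphism. Since $h$ is a homeomorphism fixing $1$ and $X$, Proposition~\ref{prop:multiplic-unit} already gives that $h$ is multiplicative on units, i.e.\ $h(uv)=h(u)h(v)$ for $u,v\in K^\nz$; together with $H(0)=0$ this yields multiplicativity of $H$ on all of $K$. Bijectivity of $H$ restricted to $K$ should follow from the fact that $h$ sends units to units bijectively (as $h$ is a homeomorphism of $G(R)$ and $U(R)=K^\nz$ is preserved), so that $H$ restricts to a bijection of $K^\nz$, and $H(0)=0$ extends this to a bijection of $K$. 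Thus the only genuinely nontrivial thing to establish is \emph{additivity}: $H(a+b)=H(a)+H(b)$ for all $a,b\in K$.

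First I would dispose of the degenerate cases of additivity. If $a=0$ or $b=0$ additivity is immediate from $H(0)=0$. If $a+b=0$, i.e.\ $b=-a$, then I need $H(-a)=-H(a)$; since $-1\in K^\nz$ is a unit and $H$ is multiplicative, $H(-a)=H(-1)H(a)$, so it suffices to know $H(-1)=-1$. This should follow because $-1$ is the unique element of order $2$ in $U(R)$ that $H$ must fix, or more directly because $(-1)^2=1$ forces $H(-1)^2=H(1)=1$ and $H(-1)\neq 1$ (as $H$ is injective and $H(1)=1$), leaving $H(-1)=-1$ in characteristic $\neq 2$; in characteristic $2$ we have $-1=1$ and there is nothing to prove. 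So the crux is additivity when $a,b,a+b$ are all nonzero.

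The key idea for additivity is to read off the sum $a+b$ from the behavior of linear polynomials modulo prime ideals, exploiting Lemma~\ref{lemma:hXu}, which tells us $h(X+u)=X+h(u)=X+H(u)$ for every $u\in K^\nz$. The natural device is to encode the relation $a+b=c$ as a congruence among linear polynomials that is detected by $h$ via Corollary~\ref{cor:huP}. Concretely, given nonzero $a,b$ with $c:=a+b\neq 0$, I would consider the linear polynomial $X+a$ and evaluate the remaining linear polynomials modulo the prime ideal $(X+a)$: one has $X\equiv -a$ and $X+c\equiv c-a=b \pmod{X+a}$. Applying $h$ and using $h(X)=X$, $h(X+a)=X+H(a)$, $h(X+c)=X+H(c)$ together with Corollary~\ref{cor:huP} (the ``in particular'' clause, applied to the prime $(X+a)$ whose image is $(X+H(a))$), the congruence $X+c\equiv b\pmod{X+a}$ transforms into $X+H(c)\equiv H(b)\pmod{X+H(a)}$, i.e.\ $-H(a)+H(c)\equiv H(b)$, giving $H(a+b)=H(c)=H(a)+H(b)$.

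The main obstacle I anticipate is the bookkeeping in the previous step: I must make sure the residues involved are genuinely units so that Corollary~\ref{cor:huP} applies, and that the image prime ideal of $(X+a)$ under $h$ is exactly $(X+H(a))$ rather than merely \emph{some} prime, which is guaranteed precisely by Lemma~\ref{lemma:hXu}. One subtlety is that Corollary~\ref{cor:huP} is stated for $f\equiv u\bmod P$ with $u$ a unit, so I need $b=c-a$ to be a nonzero constant, which holds under the standing assumption $a+b\neq 0$; the evaluation ``$X\equiv -a\bmod(X+a)$'' is the identification of $K[X]/(X+a)$ with $K$ via $X\mapsto -a$, and I should phrase everything in terms of this residue identification so that ``$\equiv H(b)$'' is unambiguous. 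Once these identifications are pinned down, additivity follows, and combining it with the multiplicativity and bijectivity established at the outset shows $H$ is a ring automorphism of $K$, completing the proof.
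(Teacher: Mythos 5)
Your proposal is correct and follows essentially the same route as the paper's proof: multiplicativity via Proposition~\ref{prop:multiplic-unit}, and additivity by applying $h$ to the congruence $X+a+b\equiv b\bmod(X+a)$ and using Lemma~\ref{lemma:hXu} together with Corollary~\ref{cor:huP} to force $H(a+b)=H(a)+H(b)$. Your separate handling of the degenerate case $a+b=0$ (via $H(-1)=-1$) is a point of extra care that the paper's write-up glosses over, but otherwise the two arguments coincide.
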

\begin{proof}
Let $a,b\in K$. If $a=0$ or $b=0$ then clearly $H(ab)=H(a)H(b)$ and $H(a+b)=H(a)+H(b)$.

Suppose $a\neq 0\neq b$. Then, $H=h$ on these values, and $h(ab)=h(a)h(b)$ by Proposition \ref{prop:multiplic-unit}. Furthermore,
\begin{equation*}
X+a+b\equiv b\bmod(X+a);
\end{equation*}
applying $h$ and using Lemma \ref{lemma:hXu}, we have
\begin{equation*}
X+h(a+b)\equiv h(b)\bmod(X+h(a)),
\end{equation*}
that is, $X+h(a)$ divides $X+h(a+b)-h(b)$. Thus, it must be $X+h(a)=X+h(a+b)-h(b)$ and $h(a)=h(a+b)-h(b)$, that is, $h(a)+h(b)=h(a+b)$. It follows that $H$ is an automorphism of $K$, as claimed.
\end{proof}

\begin{teor}\label{teor:auto}
Let $K$ is an infinite Dirichlet field of positive characteristic, and let $h$ be a self-homeomorphism of $G(K[X])$. Then, there are a unit $u\in K^\nz$ and an automorphism $\sigma$ of $K[X]$ such that $h(f)=u\sigma(f)$ for every $f\in K[X]^\nz$.
\end{teor}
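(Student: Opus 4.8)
The plan is to peel off the two evident symmetries (multiplication by a unit and a linear change of variable) so as to reach the normalized situation of Hypothesis~\ref{hyp-auto} with $h(X)=X$, to extract the field automorphism produced by Lemma~\ref{lemma:autoK}, and finally to show that once $h$ acts correctly on linear polynomials it is forced everywhere by interpolation over the infinite field $K$.

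First I would normalize $h$. Since a homeomorphism of Golomb spaces sends units to units, $v:=h(1)$ is a unit, and $\sigma_{v^{-1}}$ is a self-homeomorphism; replacing $h$ by $\sigma_{v^{-1}}\circ h$ I may assume $h(1)=1$. Then Corollary~\ref{cor:linear} gives $h(X)=cX+d$ with $c\in K^\nz$, and composing with the $K$-algebra automorphism $\tau$ of $R$ defined by $\tau(X)=c^{-1}(X-d)$ produces a self-homeomorphism fixing both $1$ and $X$. Keeping these reductions (to be undone at the end), I assume from now on $h(1)=1$ and $h(X)=X$. By Lemma~\ref{lemma:autoK} the map $\phi\colon K\to K$ extending $h|_{K^\nz}$ with $\phi(0)=0$ is a field automorphism; let $\Phi$ be the ring automorphism of $R$ obtained by applying $\phi$ to the coefficients.

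The heart of the argument is to show $h=\Phi$, which I would do by proving that $g:=\Phi^{-1}\circ h$ is the identity. As a composite of a ring automorphism and a self-homeomorphism, $g$ is a self-homeomorphism satisfying Hypothesis~\ref{hyp-auto}, with $g(1)=1$, $g(X)=X$, and $g(a)=\Phi^{-1}(\phi(a))=a$ for all $a\in K^\nz$. Lemma~\ref{lemma:hXu} then gives $g(X+u)=X+g(u)=X+u$ for every $u\in K^\nz$ which, together with $g(X)=X$, shows that $g$ fixes every monic linear polynomial $X-a$ with $a\in K$. Now for each $a\in K$ the ideal $(X-a)$ is maximal with residue field $K$, and since $g$ preserves prime ideals and fixes the generator $X-a$, it fixes $(X-a)$ setwise. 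Given $f\in R^\nz$ and $a\in K$ with $f(a)\neq 0$, the residue of $f$ modulo $(X-a)$ is the unit $f(a)$, so Corollary~\ref{cor:huP} applied to $g$ (with $P=Q=(X-a)$) gives $g(f)\equiv g(f(a))=f(a)\bmod(X-a)$, i.e.\ $g(f)(a)=f(a)$. As $f$ has finitely many roots and $K$ is infinite, $g(f)$ and $f$ agree at infinitely many points of $K$, whence $g(f)=f$; thus $g=\mathrm{id}$ and $h=\Phi$. Undoing the normalization gives $h=\sigma_v\circ\tau^{-1}\circ\Phi$, that is $h(f)=u\,\sigma(f)$ with $u:=v\in K^\nz$ and $\sigma:=\tau^{-1}\circ\Phi$ a ring automorphism of $R$, as required.

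The reductions and the appearance of $\phi$ are immediate from the results already in hand, so I expect the main obstacle to be this last step: recognizing that Corollary~\ref{cor:huP} lets one read each evaluation $f(a)$ off of $g$, and then using the infinitude of $K$ to upgrade the agreement of residues modulo all the linear primes $(X-a)$ into the polynomial identity $g(f)=f$.
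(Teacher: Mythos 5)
Your proposal is correct and follows essentially the same route as the paper's own proof: normalize by a unit so that $h(1)=1$, use Corollary \ref{cor:linear} plus a linear automorphism to arrange $h(X)=X$, extract the field automorphism from Lemma \ref{lemma:autoK} and extend it coefficient-wise, and then use Lemma \ref{lemma:hXu} together with Corollary \ref{cor:huP} at the primes $(X-a)$ and the infinitude of $K$ to force the remaining homeomorphism to be the identity. Your handling of the evaluation step (restricting to points $a$ with $f(a)\neq 0$, where the residue is a unit, and using that $f$ has only finitely many roots) is in fact slightly more careful than the paper's, which tacitly applies Corollary \ref{cor:huP} at every $t\in K$.
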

\begin{proof}
By \cite[Theorem 13]{clark-golomb}, $h(1)=u$ is a unit of $R$; since the multiplication $\sigma_u$ is a self-homeomorphism of $G(R)$, the map $h_1:=\sigma_u^{-1}\circ h$ is a self-homeomorphism such that $h_1(1)=1$.

By Corollary \ref{cor:linear}, $h_1(X)$ is a linear polynomial, and thus there is an automorphism $\sigma_1$ of $R$ such that $h_1(X)=\sigma_1(X)$. Thus, $h_2:=\sigma_1^{-1}\circ h_1$ is again a self-homeomorphism of $G(R)$, and furthermore both $1$ and $X$ are fixed points of $h_2$.

By Lemma \ref{lemma:autoK}, the restriction of $h_2$ to $K$ is an automorphism $H$; this map can be extended to an automorphism $\sigma_2$ of $R$ by setting $\sigma_2\left(\sum_ia_iX^i\right)=\sum_iH(a_i)X^i$. Thus, $h_3:=\sigma_2^{-1}\circ h_2$ is a self-homeomorphism of $G(R)$ such that $h_3(u)=u$ for every $u\in K^\nz$ and $h_3(X)=X$. By Lemma \ref{lemma:hXu}, it follows that $h_3(X+u)=X+u$ for every $u\in K^\nz$.

Let now $f\in R$, and take any $t\in K$. Then,
\begin{equation*}
\begin{cases}
f\equiv f(t)\bmod(X-t)\\
h_3(f)\equiv h_3(f)(t)\bmod(X-t).
\end{cases}
\end{equation*}
Since $h_3((X-t))=(X-t)$, by Corollary \ref{cor:huP} the first equivalence also implies $h_3(f)\equiv h_3(f(t))\bmod(X-t)$. Hence,  $h_3(f)(t)=h_3(f(t))=f(t)$. Since this happens for every $t\in K$, and $K$ is infinite, it follows that $f=h_3(f)$, that is, $h_3$ is the identity on $G(R)$.

Going back to the definition,
\begin{equation*}
h_3=\sigma_2^{-1}\circ h_2=\sigma_2^{-1}\circ \sigma_1^{-1}\circ h_1=\sigma_2^{-1}\circ \sigma_1^{-1}\circ\sigma_u^{-1}\circ h,
\end{equation*}
i.e., $h=\sigma_u\circ\sigma_1\circ\sigma_2$. Since $\sigma_u(x)=ux$ for every $x$, setting $\sigma:=\sigma_1\circ\sigma_2$ (which is still an automorphism of $R$), we obtain $h(f)=u\sigma(f)$ for every $f\in G(R)$, as claimed.
\end{proof}

\bibliographystyle{plain}
\bibliography{/bib/articoli,/bib/libri,/bib/miei}
\end{document}